\title{Integral geometry of pairs of hyperplanes or lines}
\author{Daniel Hug and Rolf Schneider}
\date{}
\newcommand{\Sd}{{\mathbb S}^{d-1}}
\newcommand{\R}{{\mathbb R}}
\newcommand{\K}{{\mathcal K}}
\newcommand{\N}{{\mathbb N}}
\newcommand{\Ha}{\mathcal{H}}
\newcommand{\B}{\mathcal{B}}
\newcommand{\D}{{\rm d}}
  \renewcommand{\dim}{{\rm dim}\,}
  \newcommand{\fed}{\,\rule{.1mm}{.20cm}\rule{.20cm}{.1mm}\,}
\newtheorem{theorem}{Theorem}
\newtheorem{lemma}{Lemma}
\begin{document}
\maketitle

\begin{abstract}
Crofton's formula of integral geometry evaluates the total motion invariant measure of the set of $k$-dimensional planes having nonempty intersection with a given convex body. This note deals with motion invariant measures on sets of pairs of hyperplanes or lines meeting a convex body. Particularly simple results are obtained if, and only if, the given body is of constant width in the first case, and of constant brightness in the second case.\\[1mm]
{\em Keywords:} Crofton formula; invariant measure; constant width; constant brightness\\[1mm]
2010 Mathematics Subject Classification: Primary 52A20, Secondary 53C65
\end{abstract}

\section{Introduction}\label{sec1}

More than 150 years ago, Crofton \cite{Cro68} proved that the total motion invariant measure of the set of lines meeting a given convex body $K$ in the Euclidean plane is equal to the boundary length $L(K)$ of $K$, multiplied by a factor that depends only on the normalization of the measure. Nowadays, a generalization of this result, known as `Crofton's formula' in integral geometry, may be written as
$$ \int_{A(d,k)} V_0(K\cap E)\,\mu_k(\D E) = c_{dk} V_{d-k}(K), \quad k=1,\dots,d-1,$$
for $K\in\K^d$, the set of convex bodies (nonempty compact convex sets) in Euclidean space $\R^d$. Here $A(d,k)$ is the space of $k$-dimensional affine subspaces of $\R^d$ with its usual topology, $\mu_k$ is its motion invariant measure with a suitable normalization, and the constant $c_{dk}$ depends on this normalization. We refer to \cite[(4.59)]{Sch14} or \cite[Thm. 5.1.1]{SW08} for more details and a more general formula. The functionals $V_0,\dots,V_{d-1}$ are the {\em intrinsic volumes}, which can be defined by the Steiner formula
$$ V_d(K+\lambda B^d) = \sum_{k=0}^d \lambda^{d-k}\kappa_{d-k}V_k(K),\quad\lambda\ge 0,$$
where $V_d$ denotes the volume, $B^d$ is the unit ball of $\R^d$, and $\kappa_d=V_d(B^d)$ (see, e.g., \cite[Section 4.1]{Sch14}). In particular, $V_0(K)=1$ for every convex body $K$, and if one extends this by defining $V_0(\emptyset)=0$, then $V_0=\chi$ is the Euler characteristic on $\K^d\cup\{\emptyset\}$. Further, $c_dV_1=W$ with $ c_d=  {2\kappa_{d-1}}/({d\kappa_d})$
is the mean width, and $2V_{d-1}=S$ is the surface area. With the usual normalization, we can write two special cases of the Crofton formula as
\begin{equation}\label{1.1}
\int_{A(d,d-1)} \chi(H\cap K)\,\mu_{d-1}(\D H) =  W(K)
\end{equation}
and
\begin{equation}\label{1.2}
\int_{A(d,1)} \chi(G\cap K)\,\mu_1(\D G) = 2c_d S(K).
\end{equation}
In the plane, both formulas (\ref{1.1}) and (\ref{1.2}) yield the same, namely Crofton's original formula.

Recently, Cuf\'{\i}, Gallego, and Revent\'os \cite{CGR19} have computed certain motion invariant measures of pairs of lines meeting a planar convex body. They consider measures on pairs of lines in the plane which with respect to the product $\mu_1\otimes \mu_1$ of the invariant line measure $\mu_1$ have a density that depends only on the angle between the lines. More precisely, let $f:\R\to\R$ be a function which is even, $\pi$-periodic and integrable over $[0,\pi]$. For a line $G$ in the plane, let $\varphi(G)$ be the angle that it makes with a fixed direction. Then the article \cite{CGR19} treats (with different notation) the integral
$$ I(K,f):= \int_{A(2,1)} \int_{A(2,1)} \chi(G_1\cap K)\chi(G_2\cap K)f(\varphi(G_1)-\varphi(G_2))\,\mu_1(\D G_1)\,\mu_1(\D G_2).$$
The authors express this integral in terms of the Fourier coefficients of $f$ and of the support function of $K$. While aiming at various consequences, they note that for a body $K$ of constant width one has the simple formula
\begin{equation}\label{1.3}
I(K,f)= \lambda[f] L(K)^2,
\end{equation}
where the constant $\lambda[f]$ depends only on $f$, and hence is given by $\lambda[f]= I(B^2,f)/(4\pi^2)$.

In the following, we extend the preceding observations to higher dimensions, in two different ways, considering either hyperplanes or lines. We prove also a converse to the higher-dimensional version of (\ref{1.3}). A main goal is to assume only invariance properties of the underlying measures,  and not a specific analytic representation involving a density with respect to $\mu_k\otimes\mu_k$. To make this precise, we recall that $A(d,k)$ is the space of $k$-dimensional planes in $\R^d$, with its usual topology, and by $G(d,k)$ we denote the Grassmannian of $k$-dimensional linear subspaces of $\R^d$ (for $k\in\{1,\dots,d-1\}$). For a topological space $X$, we denote by $\B(X)$ the $\sigma$-algebra of Borel subsets of $X$. Measures in the following, without further specification, are Borel measures. Let $\mu$ be a measure on $A(d,k)^2$. The measure $\mu$ is called {\em separately translation invariant} if for any $B\in\B(A(d,k)^2)$ and $x_1,x_2\in \R^d$ the relation $\mu(\{(L_1+x_1,L_2+x_2): (L_1,L_2)\in B\})=\mu(B)$ is satisfied. We denote by ${\sf G}(d)$ the group ${\rm SO}(d)$ if $d$ is even and the group ${\rm O}(d)$ if $d$ is odd.  The measure $\mu$ is called {\em jointly ${\sf G}(d)$-invariant} if for any $B\in\B(A(d,k)^2)$ and any $\vartheta\in {\sf G}(d)$ we have $\mu(\vartheta B)=\mu(B)$, where $\vartheta B:= \{(\vartheta L_1,\vartheta L_2): (L_1,L_2)\in B\}$. A similar definition is used for measures on $G(d,k)^2$ or on $(\Sd)^2$, where $\Sd$ is the unit sphere. By ${\mathcal M}_k$ we denote the set of locally finite Borel measures on $A(d,k)^2$ which are separately translation invariant, jointly ${\sf G}(d)$-invariant, and symmetric, that is, invariant under the mapping $(L_1,L_2)\mapsto (L_2,L_1)$.

More generally, we can consider two convex bodies $K_1,K_2\in\K^d$. Let $\Theta$ be a locally finite measure on the space $A(d,d-1)$ of hyperplanes. Then we define
$$ I(K_1,K_2,\Theta):=  \Theta(\{(H_1,H_2)\in A(d,d-1)^2: H_i\cap K_i\not=\emptyset\, (i=1,2)\}).$$
Similarly, with a locally finite measure on the space $A(d,1)$ of lines, we define
$$ J(K_1,K_2,\Theta):=  \Theta(\{(G_1,G_2)\in A(d,1)^2: G_i\cap K_i\not=\emptyset\, (i=1,2)\}).$$
We write $I(K,\Theta):=I(K,K,\Theta)$ and $J(K,\Theta):=J(K,K,\Theta)$.

If $\Theta\in{\mathcal M}_i$ for $i=d-1$, respectively $i=1$, general expressions for the quantitities $I(K_1,K_2,\Theta),J(K_1,K_2,\Theta)$ will be given in Theorem \ref{T4.1}. This theorem requires some preparations, therefore it will be formulated only in Section \ref{sec4}. Already here we can state the following.

\begin{theorem}\label{T1.1}
Let $K_1,K_2\in\K^d$, and let $\Theta$ be a locally finite measure on $A(d,d-1)^2$ which is separately translation invariant. If $K_1,K_2$ are bodies of constant width, then
\begin{equation}\label{1.4}
I(K_1,K_2,\Theta)=  \lambda[\Theta] W(K_1)W(K_2)\quad\mbox{with }\lambda[\Theta]:= I(B^d,\Theta)/4.
\end{equation}
\end{theorem}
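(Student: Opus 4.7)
The plan is to parameterize $A(d,d-1)$ by $(u,t) \in \Sd \times \R$ via $(u,t) \mapsto H(u,t) := \{x \in \Rd : \langle x,u\rangle = t\}$, to disintegrate $\Theta$ using separate translation invariance, and then to exploit constant width. For the disintegration: since translating $H(u,t)$ by $x \in \Rd$ sends $t$ to $t + \langle x,u\rangle$, and the linear form $x \mapsto \langle x, u\rangle$ is surjective onto $\R$ for every $u \in \Sd$, separate translation invariance of $\Theta$ forces, via uniqueness of Haar (Lebesgue) measure on $\R$, the existence of a finite Borel measure $\nu$ on $(\Sd)^2$ (necessarily even in each argument, because $H(u,t)=H(-u,-t)$) such that
$$\int_{A(d,d-1)^2} f\, \D\Theta = \frac{1}{4}\int_{(\Sd)^2}\int_{\R^2} f(H(u_1,t_1),H(u_2,t_2))\, \D t_1\, \D t_2\, \nu(\D(u_1,u_2))$$
for every bounded Borel $f$ of compact support. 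The factor $1/4$ records that the map $(u,t)\mapsto H(u,t)$ is $2$-to-$1$, and finiteness of $\nu$ follows from local finiteness of $\Theta$ together with compactness of $\Sd$.

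Next, I would apply this identity to $f(H_1,H_2) = \mathbf{1}\{H_1 \cap K_1 \neq \emptyset\}\mathbf{1}\{H_2 \cap K_2 \neq \emptyset\}$. Since $H(u,t)\cap K \neq \emptyset$ iff $-h(K,-u)\le t\le h(K,u)$ (where $h(K,\cdot)$ denotes the support function), the inner integral $\int_\R \mathbf{1}\{H(u,t) \cap K \neq \emptyset\}\,\D t$ equals $h(K,u)+h(K,-u) = w(K,u)$, the width of $K$ in direction $u$. Fubini then yields
$$I(K_1,K_2,\Theta) = \frac{1}{4}\int_{(\Sd)^2} w(K_1,u_1)\, w(K_2,u_2)\, \nu(\D(u_1,u_2)).$$
For bodies of constant width, $w(K_i,\cdot)\equiv W(K_i)$, so the widths factor out and $I(K_1,K_2,\Theta) = \tfrac{1}{4} W(K_1)W(K_2)\nu((\Sd)^2)$. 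Specializing to $K_1=K_2=B^d$ gives $W(B^d)=2$ and $I(B^d,\Theta)=\nu((\Sd)^2)$, whence $\lambda[\Theta]=I(B^d,\Theta)/4=\nu((\Sd)^2)/4$, and (\ref{1.4}) follows.

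The main technical obstacle is the disintegration step: rigorously constructing the measure $\nu$ on $(\Sd)^2$ and justifying the factorization in spite of the $2$-to-$1$ covering. This is a standard Fubini/Haar-uniqueness argument, but must be executed carefully so that $\nu$ is a well-defined Borel measure. Once the factorization is in hand, everything else is bookkeeping; it is worth noting that only separate translation invariance is used, so neither the rotational invariance nor the symmetry appearing in the definition of $\mathcal{M}_{d-1}$ plays any role in this theorem.
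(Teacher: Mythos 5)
Your proposal is correct and follows essentially the same route as the paper: the paper's Lemma \ref{L2.1} provides exactly the disintegration you describe (phrased with a measure $\Theta_0$ on $G(d,d-1)^2$ and Lebesgue measure on the normal lines $H_i^\perp$, rather than your spherical double cover with the factor $1/4$), and the subsequent steps --- integrating out the translations to produce the width functions, factoring them out under the constant-width hypothesis, and normalizing via $B^d$ --- coincide with the computation leading to \eqref{11}. Your closing observation that only separate translation invariance is needed also matches the paper.
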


The following theorem shows that convex bodies of constant width necessarily enter the scene in this situation.

\begin{theorem}\label{T1.1c}
If a convex body $K\in\K^d$ satisfies
\begin{equation}\label{1.4a}
I(K,\Theta)=  \lambda[\Theta] W(K)^2\quad\mbox{with }\lambda[\Theta]:= I(B^d,\Theta)/4
\end{equation}
for each locally finite and separately translation invariant measure $\Theta$ on $A(d,d-1)^2$, then $K$ has constant width.
\end{theorem}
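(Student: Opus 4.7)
The plan is to exploit the freedom in choosing $\Theta$: since (\ref{1.4a}) must hold for \emph{every} locally finite, separately translation invariant measure on $A(d,d-1)^2$, I can construct test measures that isolate a single width of $K$.

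For each unit vector $u\in\Sd$, parametrize the hyperplane with unit normal $u$ at signed distance $t$ from the origin by $H_{u,t}:=\{x\in\Rd:\langle x,u\rangle=t\}$. Define the measure $\Theta_u$ on $A(d,d-1)^2$ as the image of planar Lebesgue measure $\D t_1\,\D t_2$ under the map $(t_1,t_2)\mapsto(H_{u,t_1},H_{u,t_2})$. This $\Theta_u$ is separately translation invariant, because the translate $H_{u,t}+x$ is $H_{u,t+\langle u,x\rangle}$ and Lebesgue measure is invariant under shifts in $t$. It is locally finite, since under the standard identification $A(d,d-1)\cong(\Sd\times\R)/\{\pm 1\}$ a relatively compact neighborhood restricts the signed-distance coordinates to a bounded interval.

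Now $H_{u,t}\cap K\neq\emptyset$ holds exactly when $t\in[-h(K,-u),h(K,u)]$, an interval of length $w(K,u):=h(K,u)+h(K,-u)$, so one immediately obtains
$$I(K,\Theta_u)=\int_\R\int_\R \chi(H_{u,t_1}\cap K)\,\chi(H_{u,t_2}\cap K)\,\D t_1\,\D t_2=w(K,u)^2.$$
In particular $I(B^d,\Theta_u)=4$, hence $\lambda[\Theta_u]=1$, and (\ref{1.4a}) forces $w(K,u)^2=W(K)^2$, i.e.\ $w(K,u)=W(K)$. Since $u\in\Sd$ was arbitrary, $K$ has constant width $W(K)$.

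There is no serious technical obstacle: the argument reduces the theorem to picking the right family of test measures. The one point requiring a moment's care is the local finiteness of $\Theta_u$ as a Borel measure on $A(d,d-1)^2$, which follows from the quotient description of $A(d,d-1)$ and the compactness of $\Sd$.
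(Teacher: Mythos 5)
Your proof is correct, and it takes a genuinely different and more elementary route than the paper's. You exploit the full strength of the stated hypothesis---that (\ref{1.4a}) holds for \emph{every} locally finite, separately translation invariant $\Theta$---by testing against the singular measures $\Theta_u$ concentrated on pairs of parallel hyperplanes with a fixed common normal $u$; these are separately translation invariant and locally finite (your justifications of both points are sound) but not jointly rotation invariant, and they give directly $w_K(u)^2 = I(K,\Theta_u) = \lambda[\Theta_u]\,W(K)^2 = W(K)^2$, hence $w_K\equiv W(K)$. The paper instead tests only against measures with density $F(|\langle u(H_1),u(H_2)\rangle|)$ with respect to $\mu_{d-1}\otimes\mu_{d-1}$, which lie in ${\mathcal M}_{d-1}$; taking $F$ to be a Legendre polynomial $P_k(d;\cdot)$ and invoking the Funk--Hecke theorem, the orthogonality relations of the $P_k(d;\cdot)$, and the expansion (\ref{4.15}), it concludes $\pi_k w_K=0$ for all $k\neq 0$ and then uses completeness of the spherical harmonics. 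Your argument buys brevity and independence from Sections 3 and 4 (no disintegration lemma, no harmonic analysis). The paper's argument buys a formally stronger result: its proof shows that the conclusion already follows if (\ref{1.4a}) is assumed only for the jointly ${\sf G}(d)$-invariant, symmetric measures $\Theta\in{\mathcal M}_{d-1}$, a much smaller class from which your direction-concentrated test measures are excluded.
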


Instead of affine subspaces of codimension one, we can also consider affine subspaces of dimension one.

\begin{theorem}\label{T1.2}
Let $K\in\K^d$, and let $\Theta$ be a locally finite measure on $A(d,1)^2$ which is separately translation invariant.
If $K_1,K_2$ are bodies of constant brightness, then
\begin{equation}\label{1.5}
J(K_1,K_2,\Theta)= \kappa[\Theta] S(K_1)S(K_2) \quad\mbox{with }\kappa[\Theta]:= J(B^d,\Theta)/S(B^d)^2.
\end{equation}
\end{theorem}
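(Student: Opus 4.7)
My plan is to follow a strategy closely parallel to that of Theorem~\ref{T1.1}, with lines in place of hyperplanes and brightness in place of width. I would first parameterize $A(d,1)$ by the fibration $G = L + x$ with $L \in G(d,1)$ and $x \in L^\perp$; under this description the incidence condition $G \cap K \neq \emptyset$ becomes $x \in K \,|\, L^\perp$, whose set of realizations in $L^\perp$ has $(d-1)$-dimensional Lebesgue measure equal to $V_{d-1}(K|L^\perp)$. Next I would use separate translation invariance of $\Theta$ to obtain a disintegration
\[
\Theta = \int_{G(d,1)^2} \lambda_{L_1^\perp} \otimes \lambda_{L_2^\perp}\, \rho(d(L_1,L_2)),
\]
where $\lambda_{L^\perp}$ denotes $(d-1)$-dimensional Lebesgue measure on $L^\perp$ and $\rho$ is a measure on $G(d,1)^2$. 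Local finiteness of $\Theta$, tested against the pair $(B^d, B^d)$, forces $\rho$ to be finite.

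With the disintegration in hand, Fubini immediately yields
\[
J(K_1,K_2,\Theta) = \int_{G(d,1)^2} V_{d-1}(K_1|L_1^\perp)\, V_{d-1}(K_2|L_2^\perp)\, \rho(d(L_1,L_2)).
\]
Now I would invoke constant brightness of $K_i$, which by definition means that $V_{d-1}(K_i|L^\perp)$ is independent of $L \in G(d,1)$. To identify the common value, I would integrate Cauchy's formula $V_{d-1}(K|u^\perp) = \tfrac{1}{2}\int_{S^{d-1}} |\langle u,v\rangle|\, S_{d-1}(K,dv)$ over $u \in S^{d-1}$: the inner integral $\int_{S^{d-1}}|\langle u,v\rangle|\,\mathcal{H}^{d-1}(du)$ is a rotation-invariant constant, easily evaluated as $2\kappa_{d-1}$, so the spherical mean of $V_{d-1}(K|u^\perp)$ equals $\kappa_{d-1} S(K)/S(B^d)$. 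For a body of constant brightness, the pointwise value must coincide with this mean, giving $V_{d-1}(K_i|L^\perp) = \kappa_{d-1} S(K_i)/S(B^d)$.

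Substituting the constant value out of the integral gives
\[
J(K_1,K_2,\Theta) = \frac{\kappa_{d-1}^2\, S(K_1) S(K_2)}{S(B^d)^2}\, \rho(G(d,1)^2),
\]
and evaluating the same chain of identities at $K_1 = K_2 = B^d$ supplies $J(B^d,\Theta) = \kappa_{d-1}^2\, \rho(G(d,1)^2)$; combining the two eliminates $\rho(G(d,1)^2)$ and produces precisely (\ref{1.5}). The principal obstacle is the first step: one must rigorously show that separate translation invariance on each fiber $L_1^\perp \times L_2^\perp$ forces absolute continuity with respect to $\lambda_{L_1^\perp} \otimes \lambda_{L_2^\perp}$ with density constant along the fiber. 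This is the line analog of the hyperplane disintegration that presumably underlies Theorem~\ref{T4.1}; in the actual write-up, Theorem~\ref{T1.2} is most cleanly obtained as a specialization of Theorem~\ref{T4.1} to bodies of constant brightness.
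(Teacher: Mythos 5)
Your proposal is correct and follows essentially the same route as the paper: the disintegration of a separately translation invariant measure on $A(d,1)^2$ into $\int_{G(d,1)^2}\lambda_{L_1^\perp}\otimes\lambda_{L_2^\perp}\,\Theta_0(\D(L_1,L_2))$ is exactly the paper's Lemma~\ref{L2.1} (stated there for general $k$), after which the incidence sets project to $V_{d-1}(K_i|L_i^\perp)$ and constant brightness plus Cauchy's formula (giving the mean brightness as $\kappa_{d-1}S(K)/S(B^d)$) yields (\ref{1.5}), with the evaluation at $B^d$ fixing the constant. The one step you flag as an obstacle is precisely what Lemma~\ref{L2.1} supplies, so nothing is missing.
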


\begin{theorem}\label{T1.2c}
If a convex body $K\in\K^d$ satisfies
\begin{equation}\label{1.5a}
J(K,\Theta)= \kappa[\Theta] S(K)^2 \quad\mbox{with }\kappa[\Theta]:= J(B^d,\Theta)/S(B^d)^2
\end{equation}
for each locally finite and separately translation invariant measure $\Theta$ on $A(d,1)^2$, then $K$ has constant brightness.
\end{theorem}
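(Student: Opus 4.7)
The plan parallels the strategy behind Theorem \ref{T1.1c}, with lines and brightness replacing hyperplanes and width. The central observation is that the hypothesis imposes the identity (\ref{1.5a}) for \emph{every} locally finite, separately translation invariant measure $\Theta$, not merely for ${\sf G}(d)$-invariant or symmetric ones. This extra freedom allows the construction, for each fixed pair of directions $(u_1,u_2)\in \Sd\times\Sd$, of a ``degenerate'' measure $\Theta_{u_1,u_2}$ whose support consists of pairs of lines with exactly these prescribed directions. Evaluating (\ref{1.5a}) at each such $\Theta_{u_1,u_2}$ will then produce a pointwise identity for the brightness function $b_K(u):=V_{d-1}(K|u^\perp)$ that is strong enough to force it to be constant.

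Concretely, I would define $\Theta_{u_1,u_2}$ as the pushforward of the product of Lebesgue measures $\lambda_{u_1^\perp}\otimes\lambda_{u_2^\perp}$ under the parametrization $(x_1,x_2)\mapsto(\R u_1+x_1,\R u_2+x_2)$. This measure is locally finite on $A(d,1)^2$, since the set of lines in a fixed direction $u$ passing through a bounded region is a bounded subset of $u^\perp$, and it is manifestly separately translation invariant (an $\R^d$-translation acts on the second coordinate through orthogonal projection onto the relevant $u_i^\perp$, under which Lebesgue measure is invariant). A line $\R u_i+x_i$ meets $K$ if and only if $x_i\in K|u_i^\perp$, so
\[
J(K,\Theta_{u_1,u_2})=V_{d-1}(K|u_1^\perp)\,V_{d-1}(K|u_2^\perp)=b_K(u_1)\,b_K(u_2),
\]
and in particular $J(B^d,\Theta_{u_1,u_2})=\kappa_{d-1}^2$. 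Substituting into (\ref{1.5a}) yields
\[
b_K(u_1)\,b_K(u_2)=\frac{\kappa_{d-1}^2}{S(B^d)^2}\,S(K)^2\qquad\text{for all }u_1,u_2\in\Sd.
\]
Setting $u_1=u_2=u$ gives $b_K(u)=\kappa_{d-1}S(K)/S(B^d)$ for every $u\in\Sd$, which is precisely the statement that $K$ has constant brightness.

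I do not anticipate a real obstacle. The conceptual heart is to recognize that testing against arbitrary separately translation invariant measures (rather than only rotation invariant ones) permits one to isolate each direction pair individually; once this is in hand, both sides of (\ref{1.5a}) are elementary to compute. The only minor technical point is verifying carefully that $\Theta_{u_1,u_2}$, being concentrated on a lower-dimensional stratum of $A(d,1)^2$, is indeed a locally finite Borel measure, which follows routinely from the standard topology on the space of affine lines and the fact that compact subsets of $A(d,1)$ correspond to bounded subsets of $u^\perp$ when restricted to a fixed direction.
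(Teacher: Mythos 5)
Your proof is correct, but it takes a genuinely different route from the paper's. The paper disposes of Theorem \ref{T1.2c} with the remark that it ``can be proved similarly'' to Theorem \ref{T1.1c}, i.e., via the spherical-harmonic expansion (\ref{4.16}) of Theorem \ref{T4.1}: one tests (\ref{1.5a}) against measures in ${\mathcal M}_1$ having density $F(|\langle u(G_1),u(G_2)\rangle|)$ with $F=P_k(d;\cdot)$ with respect to $\mu_1\otimes\mu_1$, uses the Funk--Hecke theorem and the orthogonality of the Legendre polynomials to see that $\beta_m[\Theta]$ vanishes for $m\neq k$ while $\beta_k[\Theta]\neq 0$, and concludes $\pi_k b_K=0$ for all $k\neq 0$, whence $b_K$ is constant by completeness. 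You instead exploit the full strength of the hypothesis---it is assumed for \emph{all} locally finite, separately translation invariant $\Theta$, not only for jointly rotation invariant, symmetric ones with a density---and test against the singular measure $\Theta_{u_1,u_2}$ concentrated on pairs of lines with two prescribed directions; in the language of Lemma \ref{L2.1} this is the measure with $\Theta_0=\delta_{(\lin\{u_1\},\,\lin\{u_2\})}$, so it is indeed admissible, and your identity $J(K,\Theta_{u_1,u_2})=b_K(u_1)b_K(u_2)$ is exactly the line analogue of (\ref{11}) for this $\Theta_0$. The resulting pointwise relation $b_K(u_1)b_K(u_2)=\kappa_{d-1}^2S(K)^2/S(B^d)^2$ for all $u_1,u_2$ does force $b_K$ to be constant (if $S(K)=0$ then $b_K\equiv 0$; otherwise the constant is positive, $b_K$ is nowhere zero, and $b_K(u_1)=c/b_K(u_2)$ for fixed $u_2$). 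What the paper's route buys is the formally stronger conclusion that constant brightness already follows from testing against the much narrower class of jointly invariant measures absolutely continuous with respect to $\mu_1\otimes\mu_1$, reusing the machinery set up for Theorem \ref{T4.1}; what your route buys is a short, self-contained, completely elementary argument with no harmonic analysis.
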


If $\Theta\in{\mathcal M}_{d-1}$, then (\ref{1.4}) holds already if only one of the two convex bodies is of constant width.

\begin{theorem}\label{T1.3}
Let $K_1,K_2\in\K^d$, and let $\Theta\in{\mathcal M}_{d-1}$. If $K_1$ is a body of constant width, then
\begin{equation}\label{1.6}
I(K_1,K_2,\Theta)=  \lambda[\Theta] W(K_1)W(K_2)\quad\mbox{with }\lambda[\Theta]:= I(B^d,\Theta)/4.
\end{equation}
\end{theorem}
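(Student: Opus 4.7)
My plan is to disintegrate $\Theta$ along the unit normal directions of the two hyperplanes. Parametrize $H(u,t)=\{x\in\Rd:\langle x,u\rangle=t\}$ for $(u,t)\in\Sd\times\R$, modulo the identification $(u,t)\sim(-u,-t)$; then $H(u,t)\cap K\neq\emptyset$ iff $-h(K,-u)\le t\le h(K,u)$, an interval of length $w(K,u):=h(K,u)+h(K,-u)$. Separate translation invariance of $\Theta$ forces the conditional measure on $(t_1,t_2)\in\R^2$, given $(u_1,u_2)\in(\Sd)^2$, to be invariant under all translations of $\R^2$ (translating $L_i$ by $x_i\in\Rd$ shifts $t_i$ by $\langle x_i,u_i\rangle$, which ranges over all of $\R$). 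Hence this conditional measure is a constant multiple of Lebesgue measure $\lambda_2$, and $\Theta$ admits the representation
\begin{equation*}
\int f(H_1,H_2)\,\Theta(\D H_1,\D H_2) = \int_{(\Sd)^2}\int_{\R^2} f(H(u_1,t_1),H(u_2,t_2))\,\D t_1\,\D t_2\,\eta(\D u_1,\D u_2)
\end{equation*}
for a suitable locally finite measure $\eta$ on $(\Sd)^2$ (absorbing the factor coming from the $\pm$ identification into $\eta$). The joint ${\sf G}(d)$-invariance of $\Theta$ transfers to joint ${\sf G}(d)$-invariance of $\eta$.

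Integrating out the distances yields
\begin{equation*}
I(K_1,K_2,\Theta) = \int_{(\Sd)^2} w(K_1,u_1)\,w(K_2,u_2)\,\eta(\D u_1,\D u_2).
\end{equation*}
Since $K_1$ has constant width, $w(K_1,\cdot)\equiv W(K_1)$ on $\Sd$, hence
\begin{equation*}
I(K_1,K_2,\Theta) = W(K_1)\int_{\Sd} w(K_2,u_2)\,\eta_2(\D u_2),
\end{equation*}
where $\eta_2$ is the second marginal of $\eta$. This $\eta_2$ inherits the ${\sf G}(d)$-invariance from $\eta$; since ${\sf G}(d)$ acts transitively on $\Sd$, $\eta_2=c\,\sigma$ for a constant $c\ge 0$, where $\sigma$ denotes spherical Lebesgue measure. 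Combined with the identity $\int_{\Sd} w(K,u)\,\sigma(\D u)=\omega_d W(K)$ (with $\omega_d=\sigma(\Sd)$), this gives $I(K_1,K_2,\Theta)=c\omega_d W(K_1)W(K_2)$.

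Inserting $K_1=K_2=B^d$ and using $W(B^d)=2$ pins down $c\omega_d=I(B^d,\Theta)/4=\lambda[\Theta]$, completing the argument. The main technical point is the rigorous disintegration of a separately translation invariant locally finite measure on $A(d,d-1)^2$: one must handle the $(u,t)\sim(-u,-t)$ double cover carefully and verify local finiteness of $\eta$ from that of $\Theta$. Once this structural result is in hand, neither the symmetry hypothesis in $\mathcal{M}_{d-1}$ nor constant width of $K_2$ plays any role, and the sole use of joint ${\sf G}(d)$-invariance is to force the marginal $\eta_2$ to be rotation invariant, so that its integration against the width function of $K_2$ reproduces the mean width $W(K_2)$.
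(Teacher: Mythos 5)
Your argument is correct, and its second half takes a genuinely different (and more elementary) route than the paper. The first step coincides with the paper's Lemma \ref{L2.1}: the disintegration of a locally finite, separately translation invariant measure on $A(d,d-1)^2$ into a finite measure $\Theta_0$ on $G(d,d-1)^2$ (equivalently, your $\eta$ on $(\Sd)^2$ after handling the $(u,t)\sim(-u,-t)$ identification) times Lebesgue measures in the normal directions; the paper proves this rigorously by adapting the proof of Theorem 4.4.1 in Schneider--Weil, and this is exactly the ``main technical point'' you flag, so that gap is filled by the paper's own lemma. After arriving at $I(K_1,K_2,\Theta)=\int w_{K_1}(u_1)w_{K_2}(u_2)\,\eta(\D(u_1,u_2))$ (the paper's formula (\ref{11}) combined with (\ref{2.6})), the paper proceeds via Kallenberg's invariant disintegration (Lemma \ref{L2.2}), the Funk--Hecke theorem, and the spherical-harmonic expansion of Theorem \ref{T4.1}, concluding that constant width of $K_1$ kills all terms $\pi_m w_{K_1}$ with $m\neq 0$. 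You instead observe that once $w_{K_1}\equiv W(K_1)$ is pulled out, only the second marginal $\eta_2$ of $\eta$ matters, and joint ${\sf G}(d)$-invariance forces $\eta_2=c\,\sigma$ by transitivity; integrating the width function against $\sigma$ then reproduces $\omega_d W(K_2)$, and $K_1=K_2=B^d$ fixes the constant. This is simpler, avoids spherical harmonics entirely, and correctly shows that the symmetry hypothesis in ${\mathcal M}_{d-1}$ is not needed for this particular theorem. What the paper's heavier machinery buys is the full expansion (\ref{4.15}), which is what the authors need anyway for the converse statements (Theorems \ref{T1.1c} and \ref{T1.2c}); your marginal argument does not yield those.
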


Again, there is also an analogous counterpart to equation (\ref{1.5}), which we do not formulate.

Concerning bodies of constant width in general, we refer to the recent comprehensive monograph \cite{MMO19} by Martini, Montejano, and Oliveros. Information on bodies of constant brightness can be found in Gardner's book \cite{Gar06}, in particular Section 3.2 and its notes.

That equation (\ref{1.4}) holds for bodies of constant width and (\ref{1.5}) holds for bodies of constant brightness, follows easily in the next section, once the separately translation invariant measures on $A(d,k)^2$ have been found to have a special form. Theorems \ref{T1.1c}, \ref{T1.2c} and  \ref{T1.3} will be proved in Section \ref{sec4}, after Theorem \ref{T4.1} has been treated. Before that, we need to find analytic representations for the measures under consideration; these will be established in the next two sections.

\section{Separately translation invariant measures}\label{sec2}

The following lemma, which is formulated for general $k$, allows us to deal easily with translations. Here we denote by $\lambda_L$ the $j$-dimensional Lebesgue measure in a subspace $L\in G(d,j)$.

\begin{lemma}\label{L2.1}
Let $k\in\{1,\dots,d-1\}$. Let $\Theta$ be a locally finite, separately translation invariant measure on $A(d,k)^2$. Then there exists a uniquely determined finite measure $\Theta_0$ on $G(d,k)^2$ such that
\begin{equation}\label{2.1}
\Theta(A) = \int_{G(d,k)^2} \int_{L_1^\perp}\int_{L_2^\perp} {\mathbbm 1}_A(L_1+x_1,L_2+x_2)\,\lambda_{L_2^\perp}(\D x_2)
\,\lambda_{L_1^\perp}(\D x_1)\,\Theta_0(\D(L_1,L_2))
\end{equation}
for every Borel set $A\subset A(d,k)^2$. If $\Theta$ is jointly ${\sf G}(d)$-invariant and symmetric, then $\Theta_0$ is jointly ${\sf G}(d)$-invariant and symmetric.
\end{lemma}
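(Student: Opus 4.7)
The plan is to exploit the fiber bundle structure of $A(d,k)$ over $G(d,k)$. Via the canonical homeomorphism $\Phi:(L,x)\mapsto L+x$ from $\{(L,x):L\in G(d,k),\,x\in L^\perp\}$ onto $A(d,k)$, one parametrizes $A(d,k)^2$ by quadruples $(L_1,x_1,L_2,x_2)$ with $x_i\in L_i^\perp$. Under this identification, separate translation invariance of $\Theta$ becomes invariance under the independent fiber translations $x_i\mapsto x_i+z_i$ with $z_i\in L_i^\perp$, because $L_i+x_i+y_i=L_i+(x_i+\pi_{L_i^\perp}(y_i))$ for any $y_i\in\R^d$, and $\pi_{L_i^\perp}$ surjects onto $L_i^\perp$.

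To construct the candidate, for $B\in\B(G(d,k)^2)$ set
$$M(B):=\{(L_1+x_1,L_2+x_2):(L_1,L_2)\in B,\;x_i\in L_i^\perp\cap B^d\},\qquad \Theta_0(B):=\kappa_{d-k}^{-2}\,\Theta(M(B)).$$
Since $M(G(d,k)^2)$ is the continuous image of a compact set under $\Phi\times\Phi$, it is compact in $A(d,k)^2$, so local finiteness of $\Theta$ yields $\Theta_0(G(d,k)^2)<\infty$; $\sigma$-additivity is inherited from $\Theta$. The normalization is chosen so that plugging $A=M(B)$ into (\ref{2.1}) reproduces $\Theta(M(B))$ tautologically, which will also force uniqueness of $\Theta_0$.

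The heart of the proof is a fiberwise Haar-uniqueness argument. After restricting $\Theta$ to a compact exhaustion of $A(d,k)^2$, I apply a disintegration theorem to the continuous projection $\Psi:(E_1,E_2)\mapsto(L(E_1),L(E_2))$ from $A(d,k)^2$ onto $G(d,k)^2$. This yields conditional measures $\Theta^{(L_1,L_2)}$ concentrated on the fiber $\{(L_1+x_1,L_2+x_2):x_i\in L_i^\perp\}$, and separate translation invariance of $\Theta$ forces $\Theta^{(L_1,L_2)}$ to be invariant under the abelian group $L_1^\perp\times L_2^\perp\cong\R^{2(d-k)}$; by Haar uniqueness it is then a scalar multiple of product Lebesgue measure, with the scalar pinned down by the normalization above. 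A monotone-class argument extends formula (\ref{2.1}) from product cylinders to arbitrary Borel sets $A\subset A(d,k)^2$. For the final claim, when $\Theta$ is jointly ${\sf G}(d)$-invariant (respectively symmetric), applying $\vartheta\in{\sf G}(d)$ (respectively the swap $(L_1,L_2)\mapsto(L_2,L_1)$) to the right-hand side of (\ref{2.1}) again produces a valid representation of $\Theta$ of the form (\ref{2.1}), and the established uniqueness transfers the invariance to $\Theta_0$.

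The principal technical obstacle is the measurability aspect of the disintegration: producing conditional measures $\Theta^{(L_1,L_2)}$ that depend Borel-measurably on $(L_1,L_2)$ and that can simultaneously be identified, on a $\Theta_0$-full set, with scalar multiples of product Lebesgue measure. This is handled by standard disintegration results for Radon measures on Polish spaces applied after a $\sigma$-finite localization of $\Theta$; once in place, the remaining steps are essentially bookkeeping.
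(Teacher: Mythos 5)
Your proposal is correct in outline and rests on the same mechanism as the paper's proof: separate translation invariance forces the fiber measures over $G(d,k)^2$ to be Haar, hence multiples of Lebesgue measure on $L_1^\perp\times L_2^\perp$, and the explicit evaluation $\Theta_0(B)=\kappa_{d-k}^{-2}\,\Theta(M(B))$ (which is exactly the paper's formula (\ref{2.2})) then yields both uniqueness and the transfer of joint ${\sf G}(d)$-invariance and symmetry. The implementation, however, is genuinely different. You invoke an abstract disintegration theorem over the direction map $\Psi$, whereas the paper follows the elementary route of \cite[Thm.~4.4.1]{SW08}: fix a complement $U\in G(d,d-k)$, restrict to the open set $G_U$ of directions transversal to $U$, and use the product homeomorphism $G_U^2\times U^2\cong A_U^2$; for each fixed $A\in\B(G_U^2)$ the set function $B\mapsto\Theta(\varphi(A\times B))$ is then itself a locally finite translation-invariant measure on $U^2$, so Haar uniqueness applies directly and produces the product structure with no conditional measures, no exceptional null sets, and no measurable-selection issues, after which a Jacobian factor $a(L_1)a(L_2)$ converts $\lambda_U\otimes\lambda_U$ into $\lambda_{L_1^\perp}\otimes\lambda_{L_2^\perp}$ and the pieces over different $U$ are patched together. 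The cost of your shortcut is concentrated exactly where you defer to ``bookkeeping'': since every nontrivial fiber of $\Psi$ has infinite $\Theta$-measure, $\Psi_*\Theta$ is not $\sigma$-finite, so you must disintegrate the restrictions of $\Theta$ to the compact sets $\{(E_1,E_2):d(0,E_i)\le n\}$, whose conditionals live on the truncated fibers $(L_1^\perp\cap nB^d)\times(L_2^\perp\cap nB^d)$ and are not literally translation invariant; one must check consistency in $n$, pass to the limit, and then upgrade ``invariant under each fixed translation for $\Theta_0$-a.e.\ $(L_1,L_2)$'' to ``invariant under all translations for a.e.\ $(L_1,L_2)$'' via a countable dense set of translations. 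All of this can be carried out, but it is the hardest part of your argument rather than routine, and it is precisely what the paper's local trivialization is designed to avoid.
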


\begin{proof}
This can be shown in an elementary way by modifying the proof of Theorem 4.4.1 in \cite{SW08}. We reproduce part of the proof, to indicate the necessary modifications.

We choose a $(d-k)$-dimensional subspace $U\in G(d,d-k)$, and define
$$
G_U := \{L\in G(d,k): \dim(L\cap U)=0\},\quad
A_U := \{L+x:L\in G_U,\,x\in U\}.
$$
The mapping
$ \varphi:  G_U^2 \times U^2  \to  A_U^2$,  $ (L_1,L_2,x_1,x_2) \mapsto  (L_1+x_1,L_2+x_2)$,
is a homeomorphism. We fix $A\in\B(G_U^2)$, and for $B\in \B(U^2)$ we define $ \eta(B) := \Theta(\varphi(A\times B))$.
Then $\eta$ is a locally finite and translation invariant measure on $U^2$, hence it is a constant multiple of the product measure $\lambda_U\otimes\lambda_U$. Denoting the factor by $\rho(A)$, we thus have
$$  \Theta(\varphi(A\times B)) = \rho(A)(\lambda_U\otimes\lambda_U)(B).$$
Evidently, $\rho$ is a finite measure on $G_U^2$. Thus,
$ \varphi^{-1}(\Theta)(A\times B)= (\rho\otimes\lambda_U\otimes\lambda_U)(A\times B)$,
where $\varphi^{-1}(\Theta)$ denotes the image measure of $\Theta\fed A_U^2$ under the mapping $\varphi^{-1}$. This gives $\varphi^{-1}(\Theta)=\rho\otimes\lambda_U\otimes\lambda_U$ and, therefore, $\Theta\fed A_U^2 =\varphi(\rho\otimes\lambda_U\otimes\lambda_U)$. Hence, for every nonnegative measurable function $f$ on $A(d,k)^2$ we have
\begin{align*}
\int_{A_U^2} f\,\D\Theta &= \int_{G_U^2\times U^2} (f\circ\varphi)\,\D(\rho\otimes\lambda_U\otimes\lambda_U)\\
&= \int_{G_U^2} \int_{U^2} f(L_1+x_1,L_2+x_2)\,\lambda_U^2(\D(x_1,x_2))\,\rho(\D(L_1,L_2)).
\end{align*}

For given $L\in G_U$, let $\Pi_L:U\to L^\perp$ denote the orthogonal projection to the orthogonal complement of $L$. It is bijective, since $L\in G_U$. Therefore, $\Pi_L(\lambda_U)=a(L)\lambda_{L^\perp}$, with a factor $a(L)>0$ that depends only on $L$. Further, $L+x=L+\Pi_L(x)$. This yields
\begin{align*}
& \int_{U^2} f(L_1+x_1,L_2+x_2)\,\lambda_U^2(\D(x_1,x_2))\\
&\quad  = a(L_1)a(L_2)\int_{L_1^\perp} \int_{L_2^\perp} f(L_1+x_1,L_2+x_2)\,\lambda_{L_2^\perp}(\D x_2)\,\lambda_{L_1^\perp}(\D x_1).
\end{align*}
Defining a measure $\Theta_U$ on $G_U^2$ by $a(L_1)a(L_2)\rho(\D(L_1,L_2)) =:\Theta_U(\D(L_1,L_2))$, we have
$$ \int_{A_U^2} f\,\D\Theta = \int_{G_U^2}\int_{L_1^\perp}\int_{L_2^\perp} f(L_1+x_1,L_2+x_2)\,\lambda_{L_2^\perp}(\D x_2)\,\lambda_{L_1^\perp}(\D x_1)\,\Theta_U(\D(L_1,L_2)).$$

It is now clear from the rest of the proof of \cite[Thm. 4.4.1]{SW08} how one has to proceed to obtain the measure $\Theta_0$ satisfying (\ref{2.1}).

From (\ref{2.1}) we obtain, for $A\in\B(G(d,k)^2)$,
\begin{equation}\label{2.2}
\Theta_0(A) =\frac{1}{\kappa_{d-k}^2} \Theta\left(\left\{(L_1+x_1,L_2+x_2): (L_1,L_2)\in A, \,x_i\in B^d\, (i=1,2)\right\}\right).
\end{equation}
From this equation, it is obvious that $\Theta_0$ is finite and is uniquely determined. We also see that $\Theta_0$ is jointly ${\sf G}(d)$-invariant and symmetric if this holds for $\Theta$.
\end{proof}

Now let $K_i\in\K^d$ for $i=1,2$, and for $u\in\Sd$ let $w_{K_i}(u)$ be the width of $K_i$ at $u$, that is, the distance between the two supporting hyperplanes of $K_i$ orthogonal to $u$. For a hyperplane $H$, we denote by $u(H)$ one of its two unit normal vectors. If $\Theta$ satisfies the assumptions of Lemma \ref{L2.1}, then this lemma yields
\begin{align}
 I(K_1,K_2,\Theta)
& =  \int_{G(d,d-1)^2} \int_{H_1^\perp}\int_{H_2^\perp} {\mathbbm 1}\{(H_1+x_1)\cap K_1\not=\emptyset\}
{\mathbbm 1}\{(H_2+x_2)\cap K_2\not=\emptyset\}\nonumber\\
& \hspace{4mm}\lambda_{H_2^\perp}(\D x_2)\,\lambda_{H_1^\perp}(\D x_1)\,\Theta_0(\D(H_1,H_2))\nonumber\\
&= \int_{G(d,d-1)^2} w_{K_1}(u(H_1))w_{K_2}(u(H_2))\,\Theta_0(\D(H_1,H_2)).\label{11}
\end{align}
If now $K_i$ is of constant width $w_{K_i}=W(K_i)$ for $i=1,2$, then this gives
$$ I(K_1,K_2,\Theta) = W(K_1)W(K_2) \int_{G(d,d-1)^2}\Theta_0(\D(H_1,H_2)),$$
which is (\ref{1.4}).

Relation (\ref{1.5}) is obtained similarly, replacing the width function by the brightness function and noting that the surface area of a convex body is, up to a dimension-dependent factor, the mean value of its brightness function.

\section{The measures in ${\mathcal M}_{d-1}$ or ${\mathcal M}_1$}\label{sec3}

Our next aim is to obtain an analytic representation for jointly ${\sf G}(d)$-invariant measures on pairs of points on the unit sphere $\Sd$. By $\sigma$ we denote the spherical Lebesgue measure on $\Sd$. For $u\in\Sd$ and $t\in [-1,1]$ let
$ S_{u,t}:= \{x\in\Sd:\langle u,x\rangle =t\}$.
For $t\in (-1,1)$, we denote by $\sigma_{u,t}$ the normalized spherical Lebesgue measure on the $(d-2)$-sphere $S_{u,t}$. For $t\in\{-1,1\}$, the measure $\sigma_{u,t}$ is the Dirac measure at $-u$, respectively $u$. The measures $\sigma_{u,t}$ are considered as measures on $\Sd$.

\begin{lemma}\label{L2.2}
Let $M$ be a finite, jointly ${\sf G}(d)$-invariant measure on $(\Sd)^2$. Then there is a unique finite, even measure $\psi$ on $[-1,1]$ such that
\begin{equation}\label{2.3}
\int_{(\Sd)^2} f\,\D M= \int_{\Sd}\int_{[-1,1]}\int_{S_{u,t}} f(u,v)\,\sigma_{u,t}(\D v)\,\psi(\D t)\,\sigma(\D u)
\end{equation}
for every nonnegative, measurable function $f$ on $(\Sd)^2$.
\end{lemma}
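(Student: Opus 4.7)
The plan is to disintegrate $M$ according to the diagonal $\mathsf{G}(d)$-orbit structure of $(\mathbb{S}^{d-1})^2$. Because $\vartheta\in\mathsf{G}(d)$ acts by $(u,v)\mapsto(\vartheta u,\vartheta v)$ and preserves $\langle u,v\rangle$, the natural orbit coordinates are the first vector $u$ together with the scalar $t:=\langle u,v\rangle\in[-1,1]$. I would first introduce the continuous map $\pi\colon(\mathbb{S}^{d-1})^2\to\mathbb{S}^{d-1}\times[-1,1]$ defined by $\pi(u,v):=(u,\langle u,v\rangle)$ and analyze the pushforward $\tilde\nu:=\pi_*M$. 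Joint $\mathsf{G}(d)$-invariance of $M$ translates to invariance of $\tilde\nu$ under the action $\vartheta\times\mathrm{id}$ on $\mathbb{S}^{d-1}\times[-1,1]$, and since $\mathsf{G}(d)$ acts transitively on $\mathbb{S}^{d-1}$, a standard argument (e.g.\ disintegrating $\tilde\nu$ over its second coordinate and invoking uniqueness of the rotation-invariant probability measure on $\mathbb{S}^{d-1}$) shows that $\tilde\nu$ factors uniquely as $\tilde\nu=\sigma(\mathbb{S}^{d-1})^{-1}\,\sigma\otimes\psi$ for some finite measure $\psi$ on $[-1,1]$.

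Next I would disintegrate $M$ over the fibers of $\pi$. The fiber $\pi^{-1}(u,t)=\{u\}\times S_{u,t}$ is a $(d-2)$-sphere for $t\in(-1,1)$ and a single point for $t=\pm 1$, and the general disintegration theorem supplies a Markov kernel $(u,t)\mapsto M_{(u,t)}$ with $M=\int M_{(u,t)}\,\tilde\nu(\mathrm{d}(u,t))$. The crucial step is to identify $M_{(u,t)}=\sigma_{u,t}$: by joint $\mathsf{G}(d)$-invariance of $M$, the kernel $M_{(u,t)}$ must be invariant under the stabilizer of $(u,t)$ in $\mathsf{G}(d)$, which is the subgroup fixing $u$ and thus a copy of $\mathrm{SO}(d-1)$ (or $\mathrm{O}(d-1)$) acting on $u^\perp$. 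For $d\ge 3$ and $t\in(-1,1)$ this stabilizer acts transitively on $S_{u,t}$, so $M_{(u,t)}$ must coincide with the unique rotation-invariant probability measure $\sigma_{u,t}$; the cases $t=\pm 1$ are trivial since the fiber is then a single point. Substituting these fiber measures into the disintegration produces \eqref{2.3} after absorbing the constant $\sigma(\mathbb{S}^{d-1})$ into $\psi$.

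Uniqueness of $\psi$ follows by testing \eqref{2.3} against functions $f(u,v):=g(\langle u,v\rangle)$: both sides then reduce to $\sigma(\mathbb{S}^{d-1})\int g\,\mathrm{d}\psi$, so $\psi$ is pinned down as the normalized image of $M$ under $\langle\cdot,\cdot\rangle$. The evenness of $\psi$ is the most delicate point, because the diagonal $\mathsf{G}(d)$-action cannot on its own flip the sign of $\langle u,v\rangle$; I would obtain it by invoking the antipodal ambiguity intrinsic to the measures to which this lemma is applied in Section~\ref{sec3} — each unoriented hyperplane (or line) determines its unit normal (or direction) only up to sign, so the induced measure on $(\mathbb{S}^{d-1})^2$ is automatically invariant under $(u,v)\mapsto(-u,v)$, which sends $\langle u,v\rangle$ to $-\langle u,v\rangle$ and therefore forces $\psi(-B)=\psi(B)$. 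I expect the main obstacle to be the identification $M_{(u,t)}=\sigma_{u,t}$, which rests on the transitivity of the pointwise stabilizer on the fibers and is the step that drives the use of $\mathrm{O}(d)$ in odd dimensions and the separate handling of the poles $t=\pm 1$.
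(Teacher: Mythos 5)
Your existence and uniqueness arguments follow essentially the same route as the paper: the paper applies Kallenberg's invariant disintegration theorem twice (first to write $M=\sigma\otimes\mu$ with a ${\sf G}(d)$-invariant kernel $\mu$, then to disintegrate $\mu(u,\cdot)$ over $p_u(v)=\langle u,v\rangle$), identifies the fiber measures with $\sigma_{u,t}$ by transitivity of the stabilizer of $u$ on $S_{u,t}$ for $t\in(-1,1)$, and pins down $\psi$ by testing against $f(u,v)=g(\langle u,v\rangle)$ — exactly your steps, merely packaged as two successive disintegrations rather than one disintegration over $\pi(u,v)=(u,\langle u,v\rangle)$. One small caution: a generic disintegration theorem yields a kernel defined only almost everywhere, and the stabilizer-invariance you need holds only for a.e.\ $(u,t)$; the paper avoids fussing over versions by invoking Kallenberg's \emph{invariant} disintegration result. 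Since the kernel only ever appears under an integral, this does not affect the conclusion, but you should either cite an invariant disintegration theorem or phrase the identification $M_{(u,t)}=\sigma_{u,t}$ as an a.e.\ statement.

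On the evenness of $\psi$ your instinct is better than the paper's own argument. The paper derives evenness from $-\mathrm{id}\in{\sf G}(d)$ via the chain $\psi(-A)=\mu(-u,p_{-u}^{-1}(-A))=\mu(-u,-p_u^{-1}(A))=\mu(u,p_u^{-1}(A))$, but the middle identity is false: one has $p_{-u}^{-1}(-A)=p_u^{-1}(A)$, whereas $-p_u^{-1}(A)=p_u^{-1}(-A)$, so the chain only returns $\psi(-A)=\psi(-A)$. As you observe, the diagonal action preserves $\langle u,v\rangle$ and can never force evenness; indeed the image of $\sigma$ under $u\mapsto(u,u)$ is finite and jointly ${\sf G}(d)$-invariant with $\psi=\delta_1$, so the evenness assertion fails for general $M$. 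Your fix — using that the measures $M$ arising in Section \ref{sec3} are, by the ``small set'' construction from unoriented hyperplanes or lines, additionally invariant under $(u,v)\mapsto(-u,v)$, which does flip the sign of $\langle u,v\rangle$ — is the correct way to obtain $\psi(-B)=\psi(B)$, at the cost of adding this antipodal invariance as a hypothesis to the lemma (it is available wherever the lemma is applied). Strictly speaking you are then proving a corrected statement rather than the one displayed, but that is the statement the rest of the paper actually needs.
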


\begin{proof}
We use a result of Kallenberg on the existence of invariant disintegrations. It follows from Theorem 3.5 of Kallenberg \cite{Kal07} (with $S=T=\Sd$ and $\nu:=M(\cdot\times\Sd)$) that $M=\nu\otimes \mu$ (which is explained in (\ref{2.3a})), where  $\mu$ is a ${\sf G}(d)$-invariant finite kernel from $\Sd$ to $\Sd$. We note that from $M=\nu\otimes \mu$ it follows that $\nu= \int{\mathbbm 1}\{s\in \cdot\}\mu(s,T)\,\nu(\D s)$, which implies that $\mu(s,T)=1$ for $\nu$-almost all $s\in S$. (A similar observation will be used below. In the present case we may remark that, since ${\sf G}(d)$ acts transitively on $S$ and $\vartheta(T) =T$ for each $\vartheta\in{\sf G}(d)$, we even have $\mu(s,T)=1$ for all $s\in S$.) Since $\nu$ is a finite, rotation invariant Borel measure on $\Sd$, it is a constant multiple of the spherical Lebesgue measure $\sigma$. Assuming that $M\not\equiv 0$, we can choose $\nu=\sigma$, absorbing the constant into $\mu$. Then $M=\nu\otimes \mu$ means that
\begin{equation}\label{2.3a}
\int_{(\Sd)^2}  f\,\D M = \int_{\Sd} \int_{\Sd} f(u,v)\,\mu(u,\D v)\,\sigma(\D u)
\end{equation}
for every nonnegative, measurable function $f$ on $(\Sd)^2$. Here $\mu:\Sd\times\B(\Sd)\to[0,\infty)$ is a kernel, that is, a mapping such that $\mu(u,\cdot)$ is a (finite) measure for each $u\in\Sd$ and $\mu(\cdot,A)$ is measurable for each $A\in\B(\Sd)$. The ${\sf G}(d)$-invariance of $\mu$ means that
$$ \mu(\vartheta u,\vartheta A)= \mu(u,A)\quad \mbox{for } u\in \Sd,\, A\in\B(\Sd),\, \vartheta\in {\sf G}(d).$$

We fix $u\in \Sd$ and define the map $p_u: \Sd\to[-1,1]$ by $p_u(v):= \langle u,v\rangle$. By $\psi_u=p_u(\mu(u,\cdot))$ we denote the image measure of $\mu(u,\cdot)$ under $p_u$.

First we show that $\psi_u$ is independent of $u$. Let $\vartheta\in{\sf G}(d)$. For $u,v\in\Sd$ we have $p_{\vartheta u}(v)=\langle \vartheta u,v\rangle =\langle u, \vartheta^{-1}v\rangle= p_u(\vartheta^{-1}v)$ and hence, for $A\in\B([-1,1])$,
$$ x\in p_{\vartheta u}^{-1}(A) \Leftrightarrow p_{\vartheta u}(x)\in A \Leftrightarrow p_u(\vartheta^{-1}x)\in A \Leftrightarrow \vartheta^{-1}x\in p_u^{-1}(A) \Leftrightarrow x\in\vartheta p_u^{-1}(A),$$
thus $p_{\vartheta u}^{-1}(A) =\vartheta p_u^{-1}(A)$.
This gives
$$ \psi_{\vartheta u}(A) =\mu(\vartheta u, p_{\vartheta u}^{-1}(A)) = \mu(\vartheta u, \vartheta p_u^{-1}(A)) = \mu(u,p_u^{-1}(A)) = \psi_u(A).$$
Therefore, we can from now on write $\psi_u=:\psi$.

By the independence just shown, and since the reflection in the origin is in ${\sf G}(d)$, we also have
$$ \psi(-A)=\psi_{-u}(-A) =\mu(-u,p_{-u}^{-1}(-A)) = \mu(-u,-p_u^{-1}(A))=\mu(u,p_u^{-1}(A))= \psi(A),$$
thus the measure $\psi$ is even.

Now we further disintegrate the measure $\mu(u,\cdot)$. Let $q:\Sd\to\Sd$ denote the identity map. Then the image measure $M:= (p_u\times q)(\mu(u,\cdot))$ is a finite Borel measure on $[-1,1]\times\Sd$. We define the operations of the subgroup ${\sf G}_u(d):=\{\vartheta\in {\sf G}(d):\vartheta(u)=u\}$ of ${\sf G}(d) $ on  $[-1,1]$   as the identity and on $\Sd$ in the usual way. Then it is easy to check that $(p_u\times q)(\mu(u,\cdot))$ is jointly invariant under ${\sf G}_u(d)$. Hence, by another application of Kallenberg's disintegration result (with $ S=[-1,1]$, $T=\Sd$, $\nu= M(\cdot\times \Sd)=\psi)$ we obtain $(p_u\times q)(\mu(u,\cdot))=\psi\otimes \kappa_u$, where $\kappa_u:[-1,1]\to\Sd$ is a ${\sf G}_u(d)$-invariant kernel such that
\begin{equation}\label{setting}
\int_{\Sd} h(\langle u,v\rangle,v)\, \mu(u,\D v) = \int_{[-1,1]} \int_{\Sd}h(t,v)\,\kappa_u(t,\D v)\,\psi(\D t)
\end{equation}
for every nonnegative, measurable function $h$ on $[-1,1]\times\Sd$.  As noted above, for $\psi$-almost all $t\in[-1,1]$ the measure $\kappa_u(t,\cdot)$ is a probability measure, and by (\ref{setting}) it is (for almost all $t$) concentrated on $S_{u,t}$ and invariant under ${\sf G}_u(d)$.

For $t\in(-1,1)$, this measure is supported by the sphere $S_{u,t}$, hence for $\psi$-almost all $t\in (-1,1)$ it is a constant multiple of the spherical Lebesgue measure on this sphere, thus $\kappa_u(t, \cdot)= c(u,t)\sigma_{u,t}$. Since $\kappa_u(t, \cdot)$ is a probability measure, $c(u,t)=c(t)$ is independent of $u$ for $\psi$-almost all $t\in (-1,1)$. With  $c(\pm 1):=1$, the latter relation holds also for $t\in\{-1,1\}$. Since $\kappa_u(\cdot, \Sd)$ is measurable, this defines a measurable function $c$ ($\psi$-almost everywhere on $[-1,1]$). In particular, for each nonnegative measurable function $g$ on $\Sd$ we get
$$ \int_{\Sd} g\,\D\mu(u,\cdot) = \int_{[-1,1]} \int_{S_{u,t}}g(y)\,\sigma_{u,t}(\D y)c(t)\,\psi(\D t),$$
and after redefining the measure $\psi$, we can write
$$ \int_{\Sd} g\,\D\mu(u,\cdot) = \int_{[-1,1]} \int_{S_{u,t}}g(y)\,\sigma_{u,t}(\D y)\,\psi(\D t).$$
Together with (\ref{2.3a}) (and $f(u,v)=g(v)$), this yields the assertion of the lemma.

The uniqueness of $\psi$ follows from \eqref{2.3} by choosing $f(u,v)=g(\langle u,v\rangle)$ with an arbitrary nonnegative measurable function $g:[-1,1]\to[0,\infty)$.
\end{proof}

\vspace{2mm}

Now let $\Theta_0$ be a finite, jointly ${\sf G}(d)$-invariant and symmetric measure on $G(d,d-1)^2$. A set $B\in\B((\Sd)^2)$ is (for the moment) called {\em small} if $(u,v)\in B$ implies $(-u,w)\notin B$ and $(w,-v)\notin B$ for all $w\in\Sd$. Let $B$ be small. We define
$$M(B):= \Theta_0\left(\left\{(u^\perp,v^\perp)\in G(d,d-1)^2: (u,v)\in B\right\}\right).$$
Clearly, this extends to a finite measure $M$ on $\B((\Sd)^2)$, which is jointly ${\sf G}(d)$-invariant and symmetric. From the symmetry it follows that in (\ref{2.3}) we may interchange the first and the second argument of $f$. Therefore, together with (\ref{2.3}), the relation
\begin{equation}\label{2.4a}
\int_{(\Sd)^2} f\,\D M= \int_{\Sd}\int_{[-1,1]}\int_{S_{v,t}} f(u,v)\,\sigma_{v,t}(\D u)\,\psi(\D t)\,\sigma(\D v)
\end{equation}
holds for every nonnegative, measurable function $f$ on $(\Sd)^2$, with the same measure $\psi$.

For $H\in G(d,d-1)$ we denote be $u(H)$ one of the two unit normal vectors of $H$. Then for every measurable function $f:(\Sd)^2\to[0,\infty)$, which is even in each argument, we have
\begin{equation}\label{2.6}
\int_{G(d,d-1)^2} f(u(H_1),u(H_2))\,\Theta_0(\D(H_1,H_2)) =\int_{(\Sd)^2} f(u,v)\,M(\D(u,v)).
\end{equation}

Similarly, let $\Theta_0$ be a finite, jointly ${\sf G}(d)$-invariant and symmetric measure on $G(d,1)^2$. For $G\in G(d,1)$ we denote be $u(G)$ one of the two unit normal vectors parallel to $G$. Clearly, there is a finite, jointly ${\sf G}(d)$-invariant and symmetric measure $M$ on $(\Sd)^2$ such that for every measurable function $f:(\Sd)^2\to[0,\infty)$, which is even in each argument, we have
\begin{equation}\label{2.7}
\int_{G(d,1)^2} f(u(G_1),u(G_2))\,\Theta_0(\D(G_1,G_2)) =\int_{(\Sd)^2} f(u,v)\,M(\D(u,v)).
\end{equation}

\section{Formulas for general convex bodies}\label{sec4}

In the following, we assume that $d\ge 3$. The two-dimensional case can be treated with obvious modifications.

We use spherical harmonics, in particular the Funk--Hecke theorem and the Parseval relation. (For a brief introduction to spherical harmonics we refer to the Appendix of \cite{Sch14}, where relevant literature is quoted. A more comprehensive introduction is found in the Appendix to \cite{Rub15}.) By $\Ha^d_m$ we denote the real vector space of spherical harmonics of order $m$ on the unit sphere $\Sd$. The (finite) dimension of $\Ha^d_m$ is denoted by $N(d,m)$. On the space ${\bf C}(\Sd)$ of continuous real functions on $\Sd$ we define a scalar product by
$$ (f,g):= \int_{\Sd} fg\,\D \sigma, \quad f,g\in {\bf C}(\Sd),$$
where $\sigma$ denotes the spherical Lebesgue measure on $\Sd$. We write $\sigma(\Sd)=\omega_d$. Orthogonality on ${\bf C}(\Sd)$ refers to this scalar product. In each space $\Ha^d_m$ we choose an orthonormal basis $(Y_{m1},\dots,Y_{mN(d,m)})$. For $f\in {\bf C}(\Sd)$ and $m\in\N_0$, the function
$$ \pi_m f:=\sum_{j=1}^{N(d,m)} (f,Y_{mj})Y_{mj}$$
is the image of $f$ under orthogonal projection to the space $\Ha^d_m$. The Parseval relation says that
$$ (f,g) = \sum_{m=0}^\infty\sum_{j=1}^{N(d,m)} (f,Y_{mj})(g,Y_{mj}) =\sum_{m=0}^\infty (\pi_m f,\pi_m g).$$

Of the Funk--Hecke theorem, we need a consequence, which can be found in M\"uller \cite{Mul98}, Lemma 2 on page 31. It says that
\begin{equation}\label{3.1}
\int_{S_{v,t}} Y_m(u)\,\sigma_{v,t}(\D u) = P_m(d;t)Y_m(v)
\end{equation}
for $m\in \N_0$, $v\in \Sd$, $t\in[-1,1]$, where $P_m(d;\cdot)$ denotes the Legendre polynomial in dimension $d$ of order $m$ (note that the measure $\sigma_{v,t}$ is normalized).

\vspace{3mm}

We turn to calculating $I(K_1,K_2,\Theta)$ for general convex bodies $K_1,K_2\in\K^d$ and a measure $\Theta\in{\mathcal M}_{d-1}$. From (\ref{11}), (\ref{2.6}) and Lemma \ref{L2.2} we obtain
\begin{align*}
I(K_1,K_2,\Theta) &= \int_{(\Sd)^2} w_{K_1}(u)w_{K_2}(v)\,M(\D(u,v))\\
&= \int_{\Sd} \int_{[-1,1]} \int_{S_{u,t}} w_{K_1}(u)w_{K_2}(v)\,\sigma_{u,t}(\D v)\,\psi(\D t)\,\sigma(\D u)
\end{align*}
with a finite, jointly ${\sf G}(d)$-invariant and symmetric measure $M$ on $(\Sd)^2$ and a finite even measure $\psi$ on $[-1,1]$. Hence, with
$$ g(u):= \int_{[-1,1]} \int_{S_{u,t}}w_{K_2}(v)\,\sigma_{u,t}(\D v)\,\psi(\D t)$$
we get
\begin{align*}
I(K_1,K_2,\Theta) &= \int_{\Sd} g(u)w_{K_1}(u)\,\sigma(\D u)\\
&= (g,w_{K_1}) = \sum_{m=0}^\infty(\pi_m g,\pi_m w_{K_1}).
\end{align*}
With (\ref{2.3}), (\ref{2.4a}) and (\ref{3.1}), we obtain
\begin{align*}
(g,Y_m) &= \int_{\Sd} \left[\int_{[-1,1]} \int_{S_{u,t}} w_{K_2}(v)\,\sigma_{u,t}(\D v)\,\psi(\D t)\right]Y_m(u)\,\sigma(\D u)\\
&= \int_{(\Sd)^2} Y_m(u)w_{K_2}(v)\,M(\D(u,v))\\
&= \int_{\Sd} \left[\int_{[-1,1]} \int_{S_{v,t}} Y_m(u)\,\sigma_{v,t}(\D u)\,\psi(\D t)\right]w_{K_2}(v)\,\sigma(\D v)\\
&= \int_{\Sd} \left[\int_{[-1,1]} P_m(d;t)Y_m(v)\,\psi(\D t)\right]w_{K_2}(v)\,\sigma(\D v)\\
&= \beta_m[\Theta](Y_m,w_{K_2})
\end{align*}
with
\begin{equation}\label{4.14}
\beta_m[\Theta] = \int_{[-1,1]} P_m(d;t)\,\psi(\D t).
\end{equation}
Therefore,
$$ \pi_m g =\sum_{j=1}^{N(d,m)} (g,Y_{mj})Y_{mj} = \beta_m[\Theta] \sum_{j=1}^{N(d,m)} (w_{K_2},Y_{mj})Y_{mj} = \beta_m[\Theta] \pi_m w_{K_2}.$$

We note that $\beta_m[\Theta]=0$ for odd $m$, since $\psi$ is an even measure and the Legendre polynomial $P_m(d;\cdot)$ is an odd function for odd $m$. This finally gives the first part of the following theorem.

\begin{theorem}\label{T4.1}
If $K_1,K_2\in\K^d$ and $\Theta\in{\mathcal M}_{d-1}$, then
\begin{equation}\label{4.15}
I(K_1,K_2,\Theta) = \sum_{m=0,\,m \text{ even}}^\infty \beta_m[\Theta]\left(\pi_m w_{K_1},\pi_m w_{K_2}\right),
\end{equation}
where $\beta_m[\Theta]$ is given by $(\ref{4.14})$.

If $K_1,K_2\in\K^d$ and $\Theta\in{\mathcal M}_1$, then
\begin{equation}\label{4.16}
J(K_1,K_2,\Theta) = \sum_{m=0,\,m \text{ even}}^\infty \beta_m[\Theta]\left(\pi_m b_{K_1},\pi_m b_{K_2}\right),
\end{equation}
where $b_{K_i}$ is the brightness function of $K_i$.
\end{theorem}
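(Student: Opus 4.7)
The first half is essentially already done in the computation preceding the theorem statement: I have $I(K_1,K_2,\Theta) = (g, w_{K_1})$ together with $\pi_m g = \beta_m[\Theta]\,\pi_m w_{K_2}$. My plan is to apply the Parseval relation once more to obtain
\begin{equation*}
I(K_1,K_2,\Theta) = \sum_{m=0}^\infty (\pi_m g, \pi_m w_{K_1}) = \sum_{m=0}^\infty \beta_m[\Theta]\,(\pi_m w_{K_2}, \pi_m w_{K_1}),
\end{equation*}
and then to observe that $\beta_m[\Theta] = 0$ for odd $m$, because $\psi$ is an even measure while $P_m(d;\cdot)$ is an odd function when $m$ is odd. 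Restricting the sum to even $m$ produces (\ref{4.15}).

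For the second part, I will run the same argument with hyperplanes replaced by lines and the width function replaced by the brightness function. First I will apply Lemma \ref{L2.1} to $\Theta\in\mathcal{M}_1$ to obtain a finite, jointly ${\sf G}(d)$-invariant and symmetric measure $\Theta_0$ on $G(d,1)^2$. For a line $G$ with direction $u(G)\in\Sd$, the set of $x\in G^\perp$ with $(G+x)\cap K\not=\emptyset$ is precisely the orthogonal projection of $K$ onto $u(G)^\perp$, whose $\lambda_{G^\perp}$-measure equals the brightness $b_K(u(G))$. Substituting into the representation from Lemma \ref{L2.1} gives
\begin{equation*}
J(K_1,K_2,\Theta) = \int_{G(d,1)^2} b_{K_1}(u(G_1))\, b_{K_2}(u(G_2))\, \Theta_0(\D(G_1,G_2)).
\end{equation*}

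Since the brightness function is even on $\Sd$, equation (\ref{2.7}) rewrites this as $\int_{(\Sd)^2} b_{K_1}(u)b_{K_2}(v)\, M(\D(u,v))$ for a finite, jointly ${\sf G}(d)$-invariant and symmetric measure $M$ on $(\Sd)^2$. From here the argument is formally identical to the one leading to (\ref{4.15}): Lemma \ref{L2.2} disintegrates $M$ via an even measure $\psi$ on $[-1,1]$; the Funk--Hecke consequence (\ref{3.1}) yields $\pi_m h = \beta_m[\Theta]\,\pi_m b_{K_2}$ for the brightness analogue $h$ of the function $g$; and Parseval assembles the pieces, with odd terms again vanishing because $\psi$ is even. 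I do not anticipate any serious obstacle; the only genuinely new step is the identification of the inner translation integral with $b_K(u(G))$, which is immediate from the definition of the brightness function as the $(d-1)$-volume of the projection onto $u(G)^\perp$. Everything else carries over verbatim.
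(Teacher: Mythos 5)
Your proposal is correct and follows essentially the same route as the paper: the first part is exactly the Parseval computation the paper carries out before stating the theorem (with the odd terms killed because $\psi$ is even and $P_m(d;\cdot)$ is odd for odd $m$), and the second part is the paper's own reduction via Lemma \ref{L2.1}, the identification of the inner translation integral with the brightness function, and the verbatim repetition of the hyperplane argument with $w_{K_i}$ replaced by $b_{K_i}$. No gaps.
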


To prove the second part of this theorem, we note that a line $G\subset\R^d$ parallel to the unit vector $u$ can uniquely be written in the form $G= {\rm lin}\{u\}+y$ with $y\in u^\perp$. For $G$ represented in this way, we write $u=u(G)$. Let $K_i\in\K^d$ ($i=1,2$) and $\Theta\in{\mathcal M}_1$. We have
$$ J(K_1,K_2,\Theta) =\int_{G(d,1)^2} b_{K_1}(u(G_1))b_{K_2}(u(G_2))\,\Theta_0(\D(G_1,G_2))$$
by Lemma \ref{L2.1}, where $\Theta_0$ is a finite, jointly ${\sf G}(d)$-invariant and symmetric measure on $G(d,1)^2$.
Now the proof of the second part of Theorem \ref{T4.1} can be completed in the same way as that of the first part, just replacing the even function $w_{K_i}$ by the even function $b_{K_i}$.

\vspace{3mm}

\noindent{\em Proof of Theorem} \ref{T1.1c}.

Assume that $K\in\K^d$ is a convex body which satisfies $(\ref{1.4a})$ for each $\Theta\in{\mathcal M}_{d-1}$. We need only consider special measures $\Theta$, of the form
$$ \Theta = \int_{A(d,d-1)^2}{\mathbbm 1}\{(H_1,H_2)\in\cdot\}F(|\langle u(H_1),u(H_2)\rangle|)\,\mu_{d-1}^2(\D(H_1,H_2))$$
with a nonnegative, continuous function $F$. Since both sides of $(\ref{1.4a})$ are linear with respect to $\Theta$ (and hence $F$), it follows that $(\ref{1.4a})$ holds for any continuous function $F$. For such a function $F$, one obtains with the Funk--Hecke formula (\cite[p. 30]{Mul98}) that
$$ \beta_m[\Theta] =\omega_{d-1}\int_{-1}^1 F(|t|)P_m(d;t)(1-t^2)^{\frac{d-3}{2}}\D t,$$
where $\omega_{d-1}$ is the total spherical Lebesgue measure of the $(d-2)$-dimensional unit sphere.

Now let $k\in\N$ be even, $k\not=0$. Let $F$ be the restriction of the Legendre polynomial $P_k(d;\cdot)$ to $[-1,1]$. Then $F$ is an even function, and by the orthogonality properties of the Legendre polynomials (see \cite[p. 22]{Mul98}), saying that
$$ \int_{-1}^1 P_k(d;t)P_m(d;t)(1-t^2)^{\frac{d-3}{2}}\,\D t \left\{\begin{array}{lll} =0 & \mbox{if} & m\not=k,\\
\not=0 & \mbox {if} & m=k,\end{array}\right.$$
we have $\beta_m[\Theta]=0$ for $m\not= k$ and $\beta_k[\Theta]\not=0$. Therefore, $(\ref{1.4})$ (where now $\lambda[\Theta]=\beta_0[\Theta]\omega_d=0$ by (\ref{4.15})) and (\ref{4.15}) give $\pi_k w_K=0$ for $k\not= 0$ (note that (\ref{4.15}) can be applied, since $\Theta\in{\mathcal M}_{d-1}$). Since $w_K$ is an even function, we also have $\pi_kw_K=0$ for all odd $k$. Now the completeness of the system of spherical harmonics yields that $w_K$ is constant. This completes the proof of Theorem \ref{T1.1c}. \hspace*{\fill}$\Box$

It is clear that Theorem \ref{T1.2c} can be proved similarly.

\vspace{3mm}

\noindent{\em Proof of Theorem} \ref{T1.3}.

Suppose that $K_1$ is of constant width. Then the function $w_{K_1}$ is constant and hence $\pi_m w_{K_1}=0$ for $m\not=0$ (since constant functions are spherical harmonics of order $0$, and spherical harmonics of different orders are orthogonal). It follows from (\ref{4.15}) that
$$ I(K_1,K_2,\Theta)=  \beta_0[\Theta]\omega_d W(K_1)W(K_2)=\lambda[\Theta]  W(K_1)W(K_2).$$
Here we have used that $\pi_0 f= \omega_d^{-1}\int f\,\D\sigma$.

\noindent Authors' addresses:\\[2mm]
Daniel Hug\\Karlsruhe Institute of Technology, Department of Mathematics\\D-76128 Karlsruhe, Germany\\E-mail: daniel.hug@kit.edu\\[2mm]
Rolf Schneider\\Mathematisches Institut, Albert-Ludwigs-Universit{\"a}t\\D-79104 Freiburg i. Br., Germany\\E-mail: rolf.schneider@math.uni-freiburg.de

\end{document}